\documentclass[12pt,letterpaper,final,twoside,leqno]{amsart}
\usepackage{eucal, mathrsfs}
\usepackage{amsmath,amssymb,amsthm,
amscd,amsopn}
\usepackage{hyperref}
\usepackage{times}
\usepackage{xspace}
\usepackage{epsfig,epic,eepic,latexsym,color}
\usepackage[all]{xy}
\usepackage{paralist}
\usepackage{stmaryrd}
\usepackage{enumitem}
\usepackage{color}
\usepackage{soul}

\catcode`~=11 \def\UrlSpecials{\do\~{\kern -.15em\lower .7ex\hbox{~}\kern .04em}} \catcode`~=13

\newcommand{\urlwofont}[1]{\urlstyle{same}\url{#1}}

\binoppenalty=10000
\relpenalty=10000

\newcounter{are-there-sections}
\setcounter{are-there-sections}{1}

\makeatletter

\renewcommand\subsection{
  \renewcommand{\sfdefault}{pag}
  \@startsection{subsection}%
  {2}{0pt}{-\baselineskip}{.2\baselineskip}{\raggedright
    \sffamily\itshape\small
  }}
\renewcommand\section{
  \renewcommand{\sfdefault}{phv}
  \@startsection{section} %
  {1}{0pt}{\baselineskip}{.2\baselineskip}{\centering
    \sffamily
    \scshape
}}
\newcounter{lastyear}\setcounter{lastyear}{\the\year}
\addtocounter{lastyear}{-1}


\newcommand\noin{\noindent}

\newcommand\input /home/kovacs/tex/latex/{\input /home/kovacs/tex/latex/} 



\newtheoremstyle{bozont}{3pt}{3pt}%
     {\itshape}
     {}
     {\bfseries}
     {.}
     {.5em}
     {\thmname{#1}\thmnumber{ #2}\thmnote{ \rm #3}}
\newtheoremstyle{bozont-sf}{3pt}{3pt}%
     {\itshape}
     {}
     {\sffamily}
     {.}
     {.5em}
     {\thmname{#1}\thmnumber{ #2}\thmnote{ \rm #3}}
\newtheoremstyle{bozont-sc}{3pt}{3pt}%
     {\itshape}
     {}
     {\scshape}
     {.}
     {.5em}
     {\thmname{#1}\thmnumber{ #2}\thmnote{ \rm #3}}
\newtheoremstyle{bozont-remark}{3pt}{3pt}%
     {}
     {}
     {\scshape}
     {}
     {.5em}
     {\thmname{#1}\thmnumber{ #2.}\thmnote{\ \  ( #3)}}
\newtheoremstyle{bozont-def}{3pt}{3pt}%
     {}
     {}
     {\bfseries}
     {.}
     {.5em}
     {\thmname{#1}\thmnumber{ #2}\thmnote{ \rm #3}}
\newtheoremstyle{bozont-reverse}{3pt}{3pt}%
     {\itshape}
     {}
     {\bfseries}
     {.}
     {.5em}
     {\thmnumber{#2.}\thmname{ #1}\thmnote{ \rm #3}}
\newtheoremstyle{bozont-reverse-sc}{3pt}{3pt}%
     {\itshape}
     {}
     {\scshape}
     {.}
     {.5em}
     {\thmnumber{#2.}\thmname{ #1}\thmnote{ \rm #3}}
\newtheoremstyle{bozont-reverse-sf}{3pt}{3pt}%
     {\itshape}
     {}
     {\sffamily}
     {.}
     {.5em}
     {\thmnumber{#2.}\thmname{ #1}\thmnote{ \rm #3}}
\newtheoremstyle{bozont-remark-reverse}{3pt}{3pt}%
     {}
     {}
     {\sc}
     {.}
     {.5em}
     {\thmnumber{#2.}\thmname{ #1}\thmnote{ \rm #3}}
\newtheoremstyle{bozont-def-reverse}{3pt}{3pt}%
     {}
     {}
     {\bfseries}
     {.}
     {.5em}
     {\thmnumber{#2.}\thmname{ #1}\thmnote{ \rm #3}}
\newtheoremstyle{bozont-def-newnum-reverse}{3pt}{3pt}%
     {}
     {}
     {\bfseries}
     {}
     {.5em}
     {\thmnumber{#2.}\thmname{ #1}\thmnote{ \rm #3}}
\theoremstyle{bozont}    
\ifnum \value{are-there-sections}=0 {%
  \newtheorem{proclaim}{Theorem}
} 
\else {%
  \newtheorem{proclaim}{Theorem}[section]
} 
\fi
\newtheorem{thm}[proclaim]{Theorem}

\newtheorem{lem}[proclaim]{Lemma}

\theoremstyle{bozont-sc}
\newtheorem{proclaim-special}[proclaim]{\specialthmname}

\theoremstyle{bozont-remark}

\newtheorem{notation}[proclaim]{Notation}


\newtheorem*{SubHeading*}{\SubHeadingName}%
\newtheorem{SubHeading}[proclaim]{\SubHeadingName}
\newtheorem{sSubHeading}[equation]{\sSubHeadingName}
\newenvironment{demo-r}[1]{\def\SubHeadingName{#1}\begin{SubHeading-r}}
  {\end{SubHeading-r}}%
\newenvironment{subdemo-r}[1]{\def\sSubHeadingName{#1}\begin{sSubHeading-r}}
  {\end{sSubHeading-r}} %
\newenvironment{demo*}[1]{\def\SubHeadingName{#1}\begin{SubHeading*}}
  {\end{SubHeading*}}%




\newtheorem{defn-thm}[proclaim]{Definition--Theorem}  

\theoremstyle{bozont-def}

\theoremstyle{bozont-reverse}    

\theoremstyle{bozont-reverse-sc}
\newtheorem{proclaimr-special}[proclaim]{\specialthmname}
{\def\specialthmname{#1}\begin{proclaimr-special}}%
{\end{proclaimr-special}}
\theoremstyle{bozont-remark-reverse}

\newtheorem{SubHeading-r}[proclaim]{\SubHeadingName}
\newtheorem{sSubHeading-r}[equation]{\sSubHeadingName}
\newtheorem{SubHeadingr}[proclaim]{\SubHeadingName}

\theoremstyle{bozont-def-newnum-reverse}    

\theoremstyle{bozont-def-reverse}

\newtheorem{newnumspecial}[proclaim]{\specialnewnumname}

\numberwithin{equation}{proclaim}
\numberwithin{figure}{section}


\newenvironment{enumerate-p}{
  \begin{enumerate}}
  {\setcounter{equation}{\value{enumi}}\end{enumerate}}
\newenvironment{enumerate-cont}{
  \begin{enumerate}
    {\setcounter{enumi}{\value{equation}}}}
  {\setcounter{equation}{\value{enumi}}
  \end{enumerate}}



\newlength{\swidth}
\setlength{\swidth}{\textwidth}
\addtolength{\swidth}{-,5\parindent}


\makeatother

\DeclareMathAlphabet{\smallchanc}{OT1}{pzc}%
                                 {m}{it}
\DeclareFontFamily{OT1}{pzc}{}
\DeclareFontShape{OT1}{pzc}{m}{it}%
             {<-> s * [1.100] pzcmi7t}{}
\DeclareMathAlphabet{\mathchanc}{OT1}{pzc}%
                                 {m}{it}




\DeclareFontFamily{OMS}{rsfs}{\skewchar\font'60}
\DeclareFontShape{OMS}{rsfs}{m}{n}{<-5>rsfs5 <5-7>rsfs7 <7->rsfs10 }{}
\DeclareSymbolFont{rsfs}{OMS}{rsfs}{m}{n}
\DeclareSymbolFontAlphabet{\scr}{rsfs}


\newcommand{\sE}{\scr{E}}
\newcommand{\sF}{\scr{F}}
\newcommand{\sG}{\scr{G}}
\newcommand{\sH}{\scr{H}}

\newcommand{\sL}{\scr{L}}
\newcommand{\sM}{\scr{M}}

\newcommand{\sO}{\scr{O}}
\newcommand{\sP}{\scr{P}}
\newcommand{\sQ}{\scr{Q}}



\newcommand{\bP}{\mathbb{P}}











%




%


\newcommand{\factor}[2]{\left. \raise 2pt\hbox{\ensuremath{#1}} \right/
        \hskip -2pt\raise -2pt\hbox{\ensuremath{#2}}}

\def\coh#1.#2.#3.{H^{#1}(#2,#3)}
\def\dimcoh#1.#2.#3.{h^{#1}(#2,#3)}
\def\hypcoh#1.#2.#3.{\mathbb H_{\vphantom{l}}^{#1}(#2,#3)}
\def\loccoh#1.#2.#3.#4.{H^{#1}_{#2}(#3,#4)}
\def\dimloccoh#1.#2.#3.#4.{h^{#1}_{#2}(#3,#4)}
\def\lochypcoh#1.#2.#3.#4.{\mathbb H^{#1}_{#2}(#3,#4)}
\def\ses#1.#2.#3.{0  \longrightarrow  #1   \longrightarrow 
 #2 \longrightarrow #3 \longrightarrow 0} 
\def\sesshort#1.#2.#3.{0
 \rightarrow #1 \rightarrow #2 \rightarrow #3 \rightarrow 0}
\def\dist#1.#2.#3.{  #1   \longrightarrow 
 #2 \longrightarrow #3 \stackrel{+1}{\longrightarrow} } 
\def\CDdist#1.#2.#3.{  #1   @>>>  #2  @>>>   #3 @>+1>> }  
\def\shortses#1.#2.#3.{0  \rightarrow  #1   \rightarrow 
 #2  \rightarrow   #3 \rightarrow  0}
\def\shortdist#1.#2.#3.{  #1   \rightarrow 
 #2  \rightarrow   #3 \stackrel{+1}{\rightarrow} }  
\def\ddist#1.#2.#3.#4.#5.#6.{\CD
#1 @>>> #2 @>>> #3 @>+1>> \\
@VVV @VVV @VVV \\
#4 @>>> #5 @>>> #6 @>+1>> 
\endCD}
\def\ddistun#1.#2.#3.#4.#5.#6.{\CD
#1 @>>> #2 @>>> #3 @>+1>> \\
@. @VVV @VVV  \\
#4 @>>> #5 @>>> #6 @>+1>> 
\endCD}
\def\Iff#1#2#3{
\hfil\hbox{\hsize =#1
\vtop{\noin #2}
\hskip.5cm 
\lower.5\baselineskip\hbox{$\Leftrightarrow$}\hskip.5cm
\vtop{\noin #3}}\hfil\medskip}
\newcommand{\union}\cup
\newcommand{\intersect}\cap
\newcommand{\Union}\bigcup
\newcommand{\Intersect}\bigcap
\def\myoplus#1.#2.{\underset #1 \to {\overset #2 \to \oplus}}




\title{Viehweg's hyperbolicity conjecture is true over compact bases}
\author{Zsolt Patakfalvi}

\begin{document}

\maketitle

\section{Introduction}

Generalizing a classical conjecture of Shafarevich, Viehweg conjectured that manifolds mapping quasi-finitely to the moduli stack of canonically polarized manifolds are of log-general type. In fact, he conjectured more generally that if a manifold $U$ is a base of a family of projective manifolds with semi-ample canonical sheaves and maximal variation then $U$ is of log-general type. This is referred to usually as Viehweg's hyperbolicity conjecture, and is part of a larger package, generalizing different aspects of the aforementioned Shafarevich conjecture. We refer to \cite[Chapter 16]{HCD_KSJ_RA} for a detailed list of the related results and conjectures. 

This short paper proves Viehweg's hyperbolicity conjecture when $U$ is projective. We also show that the conjecture holds when the compactification of $U$ is not uniruled. The paper, at least in spirit, is the continuation of the very short paper \cite{KS_KSJ_FOV}. That  paper proves Viehweg's hyperbolicity conjecture over compact bases assuming the full Minimal Model Program and the Abundance conjecture. Here we manage to drop these two assumptions.

Let us introduce first the basic setup used in the article.

\begin{notation}
\label{notation:Viehweg1}
Fix a  a projective  manifold $B$ over an algebraically closed field of characteristic zero,  a normal crossing divisor $\Delta \subseteq B$, and define $U := B \setminus \Delta$. 
\end{notation}

Recall that for a family $X \to U$ of varieties over an integral base, the \emph{variation is maximal}, if for a generic $u \in U$ there are finitely many $u' \in U $ such that $X_u$ is birational to $X_{u'}$ \cite[Definition 2.8]{KJ_SOT}. Many times we will need to extend Notation \ref{notation:Viehweg1} as follows.

\begin{notation}
\label{notation:Viehweg2}
In addition to Notation \ref{notation:Viehweg1}, assume that there is a family $X \to U$ of smooth projective manifolds with maximal variation and $\omega_{X/U}$ $f$-semi-ample.
\end{notation}

Note that a particular case of Notation \ref{notation:Viehweg2} is when  $U$ maps quasi-fintely to  the moduli stack of canonically polarized manifolds. This might be the primary case of interest for some of the readers. 

The main result of the paper is as follows.

\begin{thm}
\label{thm:Viehweg}
Viehweg's hyperbolicity conjecture is true over compact or non-uniruled bases. That is, in the situation of Notation \ref{notation:Viehweg2}, if either
\begin{enumerate}
 \item \label{itm:Viehweg:compact} $\Delta= \emptyset$ or
\item \label{itm:Viehweg:uniruled} $B$ is not uniruled
\end{enumerate}
 then $\omega_B(\Delta)$ is big. 
\end{thm}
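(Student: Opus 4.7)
My plan is to combine the Viehweg--Zuo construction of a positivity subsheaf of the log cotangent bundle with the Campana--P\u{a}un theorem on bigness of subsheaves of symmetric log cotangent sheaves. This reroutes around the Minimal Model Program and Abundance input used in the earlier treatment \cite{KS_KSJ_FOV}, replacing them by a structural positivity result for subsheaves of $\Omega^1_B(\log \Delta)$.

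\emph{Step 1 (Viehweg--Zuo sheaf).} Under the assumptions of Notation~\ref{notation:Viehweg2}, the maximal variation of $f$ together with $f$-semi-ampleness of $\omega_{X/U}$ allows one to invoke the Viehweg--Zuo Hodge-theoretic machinery, applied to a suitable cyclic cover taming the boundary $\Delta$, to produce a positive integer $m$ and a big invertible subsheaf
\[
\mathcal{A} \hookrightarrow \mathrm{Sym}^m \Omega_B^1(\log \Delta).
\]
This step is by now standard and does not require either MMP or Abundance.

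\emph{Step 2 (Campana--P\u{a}un).} On a log smooth projective pair, the existence of a big invertible subsheaf of $\mathrm{Sym}^m \Omega_B^1(\log \Delta)$ implies, by the theorem of Campana--P\u{a}un, that $K_B + \Delta$ is big. In case~\eqref{itm:Viehweg:compact} ($\Delta = \emptyset$) this is essentially immediate: Campana--P\u{a}un applied to $\mathrm{Sym}^m \Omega_B^1$ directly yields bigness of $K_B$. In case~\eqref{itm:Viehweg:uniruled}, non-uniruledness of $B$ supplies, via Boucksom--Demailly--P\u{a}un--Peternell, the pseudo-effectivity of $K_B$ and hence of $K_B + \Delta$; this is the input needed to let the Campana--P\u{a}un mechanism upgrade the bigness of $\mathcal{A}$ to bigness of $K_B + \Delta$.

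\emph{Main obstacle.} I expect the most delicate piece to be case~\eqref{itm:Viehweg:uniruled}: one must control the Viehweg--Zuo construction with respect to the \emph{given} compactification $(B,\Delta)$ (rather than merely on some auxiliary log resolution) so that the resulting bigness conclusion genuinely descends to $K_B + \Delta$ as stated. The role of non-uniruledness here is precisely to let Boucksom--Demailly--P\u{a}un--Peternell substitute for the MMP+Abundance hypothesis of \cite{KS_KSJ_FOV}, providing the pseudo-effectivity that feeds the Campana--P\u{a}un bigness argument.
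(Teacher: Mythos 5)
Your overall architecture (Viehweg--Zuo positivity plus a Campana--P\u{a}un input plus non-uniruledness via BDPP) is the same as the paper's, but Step~2 as written contains the essential gap. The statement you invoke --- ``a big invertible subsheaf of $\mathrm{Sym}^m\Omega^1_B(\log\Delta)$ implies $K_B+\Delta$ big'' --- is not an available lemma in this setting: that assertion \emph{is} the full Viehweg hyperbolicity conjecture (Campana--P\u{a}un's later birational-stability theorem), valid with no compactness or non-uniruledness hypothesis, so citing it would make the case distinction, and indeed the theorem being proved, vacuous. The Campana--P\u{a}un result one can legitimately use here is their \emph{orbifold generic semi-positivity} theorem: if $B$ is not uniruled, then every torsion-free quotient of $\Omega_B^{\otimes m}$ has pseudo-effective determinant. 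The ``upgrade'' you allude to but do not carry out is the actual content of the proof: let $\sG$ be the saturation of the Viehweg--Zuo sheaf $\sF$ inside $\Omega_B(\log\Delta)^{\otimes m}$, so that $\sH:=\Omega_B(\log\Delta)^{\otimes m}/\sG$ is torsion-free; apply generic semi-positivity to conclude $\det\sH$ is pseudo-effective (this needs a small extra step, since that theorem concerns quotients of $\Omega_B^{\otimes m}$ rather than of the log bundle: the image of $\Omega_B^{\otimes m}$ in $\sH$ is a generically isomorphic torsion-free subsheaf, so the two determinants differ by an effective divisor); then
\[
\omega_B(\Delta)^{N}\cong\det\bigl(\Omega_B(\log\Delta)^{\otimes m}\bigr)\cong\det\sG\otimes\det\sH
\]
is big, being the tensor product of a big and a pseudo-effective line bundle. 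Note that pseudo-effectivity of $K_B+\Delta$ together with the existence of a big subsheaf does not formally imply bigness of $K_B+\Delta$; the saturation--quotient--determinant computation above is where that implication actually happens, and it is absent from your write-up.

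A second, smaller gap is case~\eqref{itm:Viehweg:compact}. It is not ``essentially immediate'': with $\Delta=\emptyset$ one still needs $B$ to be non-uniruled before generic semi-positivity applies, and this is a genuine cited theorem (a projective base of a maximal-variation family as in Notation~\ref{notation:Viehweg2} cannot be uniruled), not a formal consequence of compactness. Once that input is supplied, case~\eqref{itm:Viehweg:compact} reduces to case~\eqref{itm:Viehweg:uniruled} and the argument above applies verbatim.
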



\section{Technicalities}

First, recall the following fundamental property of base-spaces of manifolds with semi-ample canonical sheaves. It is the main ingredient in the proofs of \cite{KS_KSJ_FOV} and \cite{KS_KSJ_TS} as well.

\begin{lem} \cite[Theorem 1.4]{VE_ZK_BS}
\label{lem:Viehweg-Zuo}
In the situation of Notation \ref{notation:Viehweg2},  there is a big sheaf $\sF$ contained in $\Omega_B(\log \Delta)^{\otimes m}$ for some integer $m>0$.
\end{lem}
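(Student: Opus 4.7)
The strategy is to combine Lemma~\ref{lem:Viehweg-Zuo} with the Campana--P\u{a}un positivity theorem for logarithmic cotangent bundles, the BDPP characterization of uniruledness, and a determinant trick to pass from arbitrary big subsheaves to big invertible subsheaves. This replaces the Kovács--Schwede use of the MMP and Abundance in \cite{KS_KSJ_FOV}: rather than running an MMP on $(B,\Delta)$, one feeds the Viehweg--Zuo subsheaf directly into an orbifold Bogomolov--Sommese--type inequality.

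First I would apply Lemma~\ref{lem:Viehweg-Zuo} to obtain a big torsion-free subsheaf $\sF \hookrightarrow \Omega_B(\log\Delta)^{\otimes m}$ of some rank $r$. Since $\sF$ is big, its determinant $\det\sF$ is a big line bundle, and the chain
\[
\det\sF \;\hookrightarrow\; \wedge^r \sF \;\hookrightarrow\; \wedge^r\bigl(\Omega_B(\log\Delta)^{\otimes m}\bigr) \;\hookrightarrow\; \Omega_B(\log\Delta)^{\otimes mr},
\]
where the final inclusion uses that in characteristic zero the alternating representation is a direct summand of the tensor representation, produces a big invertible subsheaf $\sL \hookrightarrow \Omega_B(\log\Delta)^{\otimes N}$ with $N := mr$. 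This reduces the problem to the technically cleaner rank-one setting.

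Next I would establish pseudo-effectivity of $K_B + \Delta$. In case~\eqref{itm:Viehweg:uniruled}, non-uniruledness of $B$ combined with the theorem of Boucksom--Demailly--P\u{a}un--Peternell gives that $K_B$ is pseudo-effective, and effectivity of $\Delta$ upgrades this to pseudo-effectivity of $K_B + \Delta$. In case~\eqref{itm:Viehweg:compact} we have $\Delta = \emptyset$, so it suffices to prove that $B$ is non-uniruled. If it were uniruled, a free rational curve $\phi\colon\bP^1 \to B$ through a general point would satisfy $\phi^*\Omega_B^1 \cong \bigoplus_i \sO_{\bP^1}(-a_i)$ with all $a_i \geq 0$, forcing every line subsheaf of $\phi^*(\Omega_B^1)^{\otimes N}$ to have non-positive degree. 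But Kodaira's lemma writes $\sL \equiv A + E$ with $A$ ample and $E$ effective, and choosing $\phi$ whose image is not contained in $\supp E$ (possible since such rational curves cover $B$) forces $\sL \cdot \phi_*[\bP^1] > 0$, a contradiction. Hence $B$ is non-uniruled and BDPP applies.

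Finally, the crucial step is to invoke the Campana--P\u{a}un theorem on the birational positivity of orbifold cotangent bundles: once $K_B + \Delta$ is pseudo-effective and $\Omega_B(\log\Delta)^{\otimes N}$ admits a big line subsheaf $\sL$, the class $K_B+\Delta$ itself must be big. Applying this to the $\sL$ constructed above gives bigness of $\omega_B(\Delta)$ in both cases. The main obstacle is precisely this final step: one must identify the formulation of Campana--P\u{a}un that genuinely upgrades a big line subsheaf of some tensor power of $\Omega(\log\Delta)$ to bigness of $K_B+\Delta$ without passing through the Minimal Model Program or Abundance, since avoiding those two inputs is the whole point of the improvement over \cite{KS_KSJ_FOV}. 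A secondary care-point, but more elementary, is the reduction to line subsheaves: one must handle possibly non-saturated torsion-free $\sF$ and verify that $\det$ preserves bigness in that setting.
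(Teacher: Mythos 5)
Your proposal does not prove the statement in question. The statement is Lemma~\ref{lem:Viehweg-Zuo} itself --- the Viehweg--Zuo theorem asserting the existence of a big subsheaf $\sF \subseteq \Omega_B(\log\Delta)^{\otimes m}$ --- and your very first step is ``apply Lemma~\ref{lem:Viehweg-Zuo} to obtain a big torsion-free subsheaf $\sF$.'' You have assumed the conclusion and then gone on to prove a different result, namely the paper's main Theorem~\ref{thm:Viehweg}. As a proof of the lemma this is circular and contains no content addressing it. The paper itself does not reprove this lemma either: it is quoted from Viehweg and Zuo, and its actual proof is a substantial piece of Hodge theory (constructing Higgs bundles with log poles from the variation of Hodge structure attached to the family $X \to U$ and comparing Kodaira--Spencer maps with the bigness coming from maximal variation). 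None of the tools you invoke --- Campana--P\u{a}un, BDPP, Kodaira's lemma, the determinant trick --- can substitute for that input, because they say nothing about the family $X \to U$; the lemma is precisely the step where the hypothesis of maximal variation enters.

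That said, as a blind reconstruction of the proof of Theorem~\ref{thm:Viehweg} your sketch is close to what the paper actually does: the paper also feeds the Viehweg--Zuo subsheaf into the Campana--P\u{a}un positivity theorem (via Lemma~\ref{lem:factor}) and uses the non-uniruledness of $B$ (by assumption, or by Koll\'ar's theorem in the compact case) together with the factorization $\omega_B(\Delta)^N \cong \det\sG \otimes \det\sH$. The main difference is that the paper does not reduce to a rank-one subsheaf of $\Omega_B(\log\Delta)^{\otimes N}$; instead it saturates $\sF$ and applies Campana--P\u{a}un to the determinant of the torsion-free \emph{quotient}, which avoids having to formulate a Bogomolov--Sommese-type statement for big line subsheaves. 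But this is all beside the point for the statement you were asked to prove.
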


Second, we list and prove the following two well-known facts.

\begin{lem}
\label{lem:determinant}
For any vector bundle $\sE$, $\det (\sE^{\otimes m}) \cong (\det (\sE))^{\otimes N}$ for some positive integer $N>0$.
\end{lem}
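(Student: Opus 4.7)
The plan is to use the standard formula for the determinant of a tensor product of two vector bundles and proceed by induction on $m$. Specifically, I would first recall that for any vector bundles $\sA$ and $\sB$ on $B$,
\[
\det(\sA \otimes \sB) \cong (\det \sA)^{\otimes \rk \sB} \otimes (\det \sB)^{\otimes \rk \sA},
\]
which is classical and can be verified either by passing to an open cover trivializing both $\sA$ and $\sB$ (reducing to a direct computation on a free module) or via the splitting principle, where it is immediate from $\det(L_1 \otimes \cdots \otimes L_k) \cong L_1 \otimes \cdots \otimes L_k$ for line bundles.

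With this identity in hand, the induction on $m$ is routine. The case $m = 1$ holds with $N = 1$. Assume inductively that $\det(\sE^{\otimes (m-1)}) \cong (\det \sE)^{\otimes N_{m-1}}$ for some positive integer $N_{m-1}$. Setting $r := \rk \sE$ and writing $\sE^{\otimes m} = \sE \otimes \sE^{\otimes (m-1)}$, the displayed identity yields
\[
\det(\sE^{\otimes m}) \cong (\det \sE)^{\otimes r^{m-1}} \otimes \det(\sE^{\otimes (m-1)})^{\otimes r} \cong (\det \sE)^{\otimes (r^{m-1} + r N_{m-1})},
\]
so one can take $N_m := r^{m-1} + r N_{m-1}$, which is a positive integer (in fact one finds $N_m = m r^{m-1}$). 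The degenerate case $r = 0$ is vacuous, as both sides are $\sO_B$. There is no genuine obstacle here; the only substantive input is the tensor-product determinant identity, which is standard, and everything else is formal bookkeeping.
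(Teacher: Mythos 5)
Your proof is correct, but it takes a different route from the paper. The paper's argument is representation-theoretic: it observes that $\sE \mapsto \det(\sE^{\otimes m})$ is induced by a one-dimensional representation of the general linear group, and since every such representation is a power of the determinant, the isomorphism follows formally; positivity of $N$ is then checked by evaluating on the example $\sE = \sO_{\bP^n}(1)^{\oplus r}$. You instead reduce to the classical identity $\det(\sA \otimes \sB) \cong (\det \sA)^{\otimes \rk \sB} \otimes (\det \sB)^{\otimes \rk \sA}$ and induct on $m$. Your approach is more elementary and has the advantage of producing the explicit exponent $N = m r^{m-1}$ (which makes positivity immediate rather than requiring a separate check on an example), while the paper's argument is shorter and applies uniformly to any line bundle functorially built from $\sE$ via a $\gl_r$-representation. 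Both are complete; your bookkeeping $N_m = r^{m-1} + r N_{m-1} = m r^{m-1}$ checks out, and your handling of the degenerate rank-zero case is a harmless extra precaution.
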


\begin{proof}
Consider the representation $\det ((\_)^{\otimes m})$. It is a one dimensional representation of the general linear group, hence it is $\det(\_)^N$ for some integer $N$. One can see that $N$ is positive here, by plugging in $\sE := \sO_{\bP^n}(1)^{\oplus r}$.
\end{proof}

By \emph{effective line bundle} we mean a line bundle corresponding to an effective divisor.

\begin{lem}
\label{lem:pseff}
If $\sL$ is a pseudo-effective and $\sM$ a big (resp.  effective) line bundle on a projective manifold, then $\sL \otimes \sM$ is big (resp. pseudo-effective).
\end{lem}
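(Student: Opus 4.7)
The pseudo-effective case is essentially immediate: an effective line bundle is pseudo-effective, and the pseudo-effective cone in $N^1(X)_\bR$ is a convex cone, so $\sL\otimes\sM$ is a sum of two pseudo-effective classes and hence pseudo-effective. I would simply record this one-line observation.

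For the big case, the plan is to reduce to the statement that a pseudo-effective class plus an ample class is big, which is where all the content sits. First, apply Kodaira's lemma to $\sM$ to obtain an integer $m>0$, an ample line bundle $\sA$, and an effective Cartier divisor $E$ with
$$\sM^{\otimes m} \cong \sA \otimes \sO(E).$$
Then
$$(\sL \otimes \sM)^{\otimes m} \cong \sL^{\otimes m} \otimes \sA \otimes \sO(E),$$
so it suffices to show the right-hand side is big, as bigness descends from a positive tensor power.

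Next I would argue that $\sL^{\otimes m}\otimes\sA$ is big. A positive multiple of a pseudo-effective line bundle is pseudo-effective, so $\sL^{\otimes m}$ is pseudo-effective. Using the standard characterization of bigness — a class $B$ is big iff $B-\epsilon A'$ is pseudo-effective for some $\epsilon>0$ and some ample $A'$ — we take $A' = \sA$ and $\epsilon < 1$: then
$$(\sL^{\otimes m}\otimes\sA) - \epsilon\sA = \sL^{\otimes m} \otimes \sA^{\otimes(1-\epsilon)}$$
(as an $\bR$-divisor class) is the sum of a pseudo-effective class and an ample (hence pseudo-effective) class, so it is pseudo-effective. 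Thus $\sL^{\otimes m}\otimes\sA$ is big. Finally, tensoring a big line bundle with an effective line bundle $\sO(E)$ preserves bigness, since $h^0\bigl((\sL^{\otimes m}\otimes\sA\otimes\sO(E))^{\otimes k}\bigr) \geq h^0\bigl((\sL^{\otimes m}\otimes\sA)^{\otimes k}\bigr)$ for all $k>0$ via multiplication by the section cutting out $kE$. So $(\sL\otimes\sM)^{\otimes m}$ is big, whence $\sL\otimes\sM$ is big.

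There is no real obstacle here; the argument is a formal manipulation of the cones of pseudo-effective and big classes. The only non-trivial input is Kodaira's lemma together with the equivalent definition of bigness as ``pseudo-effective minus a small ample is still pseudo-effective'', both standard. If one prefers to avoid the $\bR$-divisor manipulation in the ampleness step, one can instead apply Kodaira's lemma directly to $\sA$ (or replace $\sA$ by $\sA^{\otimes 2}$ and split off a small ample piece) to reach the same conclusion.
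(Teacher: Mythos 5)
Your proof is correct. The pseudo-effective case coincides with the paper's: both just use convexity of the pseudo-effective cone. For the big case the paper is terser and more abstract: it cites the description of the pseudo-effective cone as the closure of the cone of effective classes with the big classes as its interior, and then the claim is the formal convex-geometry fact that (interior point) $+$ (point of the closed cone) lies in the interior. You instead reprove that fact by hand: Kodaira's lemma decomposes $\sM^{\otimes m}\cong\sA\otimes\sO(E)$, the ``$B$ is big iff $B-\epsilon A'$ is pseudo-effective for some $\epsilon>0$'' criterion handles the ample part, and multiplication by the section cutting out $kE$ handles the effective part; each of these steps is standard and correctly applied, and passing to the $m$-th tensor power is harmless since bigness descends from positive powers. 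What your route buys is self-containedness --- you only need Kodaira's lemma and the openness of the big cone in the form of the $\epsilon$-ample test, rather than quoting the full cone-theoretic statement --- at the cost of a longer argument; the paper's route buys brevity at the cost of an external citation. Either is acceptable here.
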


\begin{proof}
By \cite[Theorem 1.2]{DJP_OTG} the effective cone is the closure of the cone generated by the classes of effective divisors, and its interior is the cone generated by big divisors. Then the statement follows.
\end{proof}

We end this section with the other main ingredient of our proof, a positivity property for log-cotangent bundles of pairs.

\begin{lem}
\label{lem:factor}
Using Notation \ref{notation:Viehweg1}, if $B$ is not uniruled and $ \psi : \Omega_B(\log \Delta)^{\otimes m} \to \sQ$ is a torsion free quotient, then $\det \sQ$ is pseudo-effective.
\end{lem}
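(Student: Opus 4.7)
The plan is to reduce the lemma to the Campana--Paun generic semi-positivity theorem for log cotangent bundles, after first verifying its pseudo-effectivity hypothesis via the Boucksom--Demailly--Paun--Peternell (BDPP) characterization of uniruledness.

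First, since $B$ is a projective manifold that is not uniruled, BDPP yields that $K_B$ is pseudo-effective. As $\Delta$ is an effective divisor, Lemma \ref{lem:pseff} then gives that $K_B+\Delta = \det \Omega_B(\log \Delta)$ is pseudo-effective as well. This puts us in the standard hypothesis for log generic semi-positivity statements.

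Next I would invoke the Campana--Paun theorem (the log, tensor-power version of Miyaoka's generic semi-positivity, established in their work on orbifold cotangent bundles): for a log smooth projective pair $(X,D)$ with $K_X+D$ pseudo-effective, every torsion-free quotient of $\Omega^1_X(\log D)^{\otimes m}$ has pseudo-effective determinant. Applied to the pair $(B,\Delta)$ and to the quotient $\sQ$ produced by $\psi$, this gives exactly the desired conclusion that $\det \sQ$ is pseudo-effective.

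The real substance of the argument lies in the cited results; the main obstacle is merely selecting the correct form of Campana--Paun, since the classical Miyaoka statement only covers $\Omega_B$ itself, whereas the hyperbolicity application requires the stronger tensor-power, logarithmic version—which in turn combines orbifold techniques with BDPP-style movable-curve arguments. Once this statement is in hand, the deduction is one line.
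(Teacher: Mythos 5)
Your argument is correct, but it routes the lemma through a stronger black box than the paper does. The paper only invokes generic semi-positivity for the plain tensor power $\Omega_B^{\otimes m}$ on a non-uniruled manifold: it views $\Omega_B^{\otimes m}$ as a subsheaf of $\Omega_B(\log \Delta)^{\otimes m}$, sets $\sP := \psi(\Omega_B^{\otimes m})$, notes that $\sP$ is a torsion-free quotient of $\Omega_B^{\otimes m}$ that is generically isomorphic to $\sQ$ (the two inclusions agree off $\Delta$), so that $\det \sQ \cong (\det \sP) \otimes \sM$ for some effective line bundle $\sM$, and then concludes from the Campana--P\u{a}un theorem for $\Omega_B^{\otimes m}$ together with Lemma \ref{lem:pseff}. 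You instead apply the logarithmic, tensor-power version of Campana--P\u{a}un directly to $\Omega_B(\log \Delta)^{\otimes m}$, after verifying its hypothesis ($K_B+\Delta$ pseudo-effective) via BDPP and Lemma \ref{lem:pseff}. That version does exist (for a reduced snc boundary the orbifold cotangent sheaf is just $\Omega_B(\log \Delta)$, so their orbifold statement specializes to exactly what you quote), so your one-line deduction is valid; its cost is that you must locate and justify the precise logarithmic statement, which lives inside the orbifold formalism, whereas the paper's subsheaf trick reduces everything to the cleaner non-logarithmic theorem at the price of two elementary observations. A further small economy of the paper's route is that the BDPP step becomes unnecessary, since the non-log theorem can be quoted directly under the non-uniruledness hypothesis.
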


\begin{proof}
Consider $\Omega_B^{\otimes m}$ as a subsheaf of $\Omega_B(\log \Delta)^{\otimes m}$ and define $\sP:= \psi (\Omega_B^{\otimes m})$. Then $\sP$ is torsion-free as well, and generically isomorphic subsheaf of $\sQ$. In particular, then $\det \sQ = (\det \sP) \otimes \sM$ for some effective line bundle $\sM$. By \cite[Theorem 0.3]{CF_PT_GSO}, $\det \sP$ is pseudo-effective. Hence By Lemma \ref{lem:pseff}, so is $\det \sQ$.
\end{proof}

\section{The proof of Theorem \ref{thm:Viehweg}}

\begin{proof}[Proof of Theorem \ref{thm:Viehweg}]
Consider the big sheaf $\sF  \subseteq \Omega_B(\log \Delta)^{\otimes m}$ guaranteed by Lemma \ref{lem:Viehweg-Zuo}. Let $\sG$ be the saturation of $\sF$ in $\Omega_B(\log \Delta)^{\otimes m}$ and define $\sH := \factor{ \Omega_B(\log \Delta)^{\otimes m}}{\sG}$. In the case of assumption \eqref{itm:Viehweg:compact}, $B$ is not uniruled by \cite[Thm. 1]{KSJ_SF}, in the other case, this is the assumption itself. Note also that
$\sH$ is torsion free since $\sG$ is saturated. Hence, by Lemma \ref{lem:factor}, $\det \sH$ is pseudo-effective. So, there is a positive integer $N$ such that
\begin{equation*} 
 \omega_B(\Delta)^N \cong 
\underbrace{\det ( \Omega_B(\log \Delta)^{\otimes m})}_{\textrm{Lemma \ref{lem:determinant}}} \cong 
\underbrace{\det \sG}_{\textrm{big}} \otimes \underbrace{\det \sH}_{\textrm{pseudo-effective}} .
\end{equation*}
So, by Lemma \ref{lem:pseff}, $\omega_B(\Delta)$ is big. This finishes our proof.
\end{proof}


\bibliographystyle{skalpha}
\bibliography{include}

\end{document}